\documentclass[10pt]{amsart}
\usepackage{amsmath, amssymb, amsthm, graphicx}


\hoffset-0.3cm
\voffset-0.5cm

\setlength{\unitlength}{1mm}

\theoremstyle{plain}
\newtheorem{prop}{Proposition}
\newtheorem{lemm}[prop]{Lemma}

\theoremstyle{definition}
\newtheorem{defi}[prop]{Definition}
\newtheorem{rema}[prop]{Remark}

\renewcommand\aa{a}
\newcommand\bb{b}
\newcommand\BR[1]{B_{#1}}
\newcommand\cc{c}
\newcommand\CC{C}
\newcommand\Diag[1]{\mathcal{D}(#1)}
\let\ge=\geqslant
\newcommand\ie{{\it i.e.}}
\newcommand\ii{i}
\newcommand\inv{^{-1}}
\newcommand\jj{j}

\newcommand\mm{m}
\newcommand\Name[3]{\{#1,#2\}_{#3}}
\newcommand\nn{n}
\newcommand\pp{p}
\newcommand\qq{q}
\newcommand\resp{{\it resp}}
\newcommand\rr{r}
\newcommand\Seq[1]{S(#1)}
\newcommand\sig[1]{\sigma_{#1}^{\relax}}
\newcommand\sigg[2]{\sigma_{#1}^{#2}}
\newcommand\siginv[1]{\sigma_{#1}^{-1}}
\newcommand\sName[3]{\scriptstyle\{\!#1,#2\!\}_{\!#3}}
\renewcommand{\ss}{s}
\newcommand\ww{w}
\begin{document}

\hfill{\tiny 2009-06}

\author{Patrick DEHORNOY}
\address{Laboratoire de Math\'ematiques Nicolas Oresme,
Universit\'e de Caen, 14032 Caen, France}
\email{dehornoy@math.unicaen.fr}
\urladdr{//www.math.unicaen.fr/\!\hbox{$\sim$}dehornoy}

\title{Combinatorial distance between braid words}

\keywords{braid relations, combinatorial distance, van
Kampen diagram}

\subjclass{20F36}

\begin{abstract}
We give a simple naming argument for establishing lower
bounds on the combinatorial distance between (positive) braid words.
\end{abstract}

\maketitle

\footnote{Work partially supported by the ANR grant ANR-08-BLAN-0269-02}
It is well-known that, for $\nn \ge 3$, Artin's braid
group~$\BR\nn$, which is the group defined by the presentation
\begin{equation*}
\bigg\langle \sig1, ..., \sig{n-1} \ \bigg\vert\ 
\begin{matrix}
\sig\ii \sig j = \sig j \sig\ii 
&\text{for} &\vert i-j \vert\ge 2\\
\sig\ii \sig j \sig\ii = \sig j \sig\ii \sig j 
&\text{for} &\vert i-j \vert = 1
\end{matrix}
\ \bigg\rangle
\end{equation*}
has a quadratic Dehn function, \ie, there exist constants~$\CC_\nn,
\CC'_\nn$ such that, if $\ww$ is an $\nn$-strand braid word of 
length~$\ell$ that represents the unit braid, then the number of braid
relations needed to transform~$\ww$ into the empty word is at
most~$\CC_\nn
\ell^2$ and, on the other hand, there exists for each~$\ell$ at least one
length~$\ell$ word~$\ww$ such that the minimal
number of such braid relations is at least~$\CC'_\nn
\ell^2$---see for instance~\cite{Eps}.

In a recent posting~\cite{HKN}, Hass, Kalka, and Nowik developed a knot
theoretical argument for establishing lower bounds on the combinatorial
distance between two equivalent positive braid words, \ie, on the minimal
number of braid relations needed to transform the former into the latter.
Using some knot invariants introduced in~\cite{HaN}, they prove

\begin{prop}
\label{P:Main}
For each~$\mm$, the combinatorial distance between the (equivalent)
braid words~$\sigg1{2\mm} (\sig2\sigg12\sig2)^\mm$
and~$(\sig2\sigg12\sig2)^\mm\sigg1{2\mm}$ equals~$4\mm^2$.
\end{prop}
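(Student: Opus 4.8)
The plan is to prove matching lower and upper bounds of $4\mm^2$, the lower bound coming from a \emph{naming} (crossing-ordering) invariant and the upper bound from an explicit rewriting. Everything lives in $\BR3$, whose sole defining relation is $\sigma_1\sigma_2\sigma_1=\sigma_2\sigma_1\sigma_2$ (there is no commutation relation, since $|1-2|=1$). I would first attach to every letter of a word a \emph{name}: reading the braid diagram left to right and following the three strands, each crossing is between a well-defined pair of strands, and I label it $\Name{\ii}{\jj}{\kk}$ if it is the $\kk$-th crossing of the pair $\{\ii,\jj\}$. The structural observation driving the whole argument is that a single braid relation is a Reidemeister-III move on the three strands occupying positions $1,2,3$, hence it permutes exactly three crossings whose strand-pairs are the three \emph{distinct} pairs $\{1,2\},\{1,3\},\{2,3\}$; in particular it never reorders two crossings of the same pair, so the name carried by each crossing is invariant under all relations and only the left-to-right order of names ever changes.

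Next I would record the crossing data of the two words. Tracing strands shows that $\sigma_1^{2\mm}$ contributes $2\mm$ crossings, all of type $\{1,2\}$, while each block $\sigma_2\sigma_1^2\sigma_2$ is a pure braid contributing two crossings of type $\{1,3\}$, two of type $\{2,3\}$, and none of type $\{1,2\}$. Thus both words have exactly $2\mm$ crossings of each pair (consistent with their representing the central $\Delta^{2\mm}$, where $\Delta=\sigma_1\sigma_2\sigma_1$), but in $\sigma_1^{2\mm}(\sigma_2\sigma_1^2\sigma_2)^\mm$ every $\{1,2\}$-crossing \emph{precedes} every crossing of the other two types, whereas in $(\sigma_2\sigma_1^2\sigma_2)^\mm\sigma_1^{2\mm}$ every $\{1,2\}$-crossing \emph{follows} them.

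For the lower bound I would use the invariant $\Phi(\ww)$ counting the pairs $(\gamma,\delta)$ with $\gamma$ a $\{1,2\}$-crossing, $\delta$ a $\{1,3\}$- or $\{2,3\}$-crossing, and $\gamma$ before $\delta$ in $\ww$. By the previous paragraph $\Phi$ equals $2\mm\cdot 4\mm=8\mm^2$ on the first word and $0$ on the second. The crux is to bound the effect of one relation: among the three crossings it permutes, exactly one is of type $\{1,2\}$, so only its order relative to the other two can flip, giving $|\Delta\Phi|\le 2$. Hence at least $8\mm^2/2=4\mm^2$ relations are needed. This ``at most $2$ per move'' step is the heart of the matter and the place where the naming framework does its work.

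Finally, for the upper bound I would show that $\sigma_1^2$ and $\cc=\sigma_2\sigma_1^2\sigma_2$ commute at cost $4$: the explicit chain $\sigma_1^2\sigma_2\sigma_1^2\sigma_2\to(\sigma_1\sigma_2)^3\to\cdots\to\sigma_2\sigma_1^2\sigma_2\sigma_1^2$ uses four relations, since each side reduces in one relation to the central $\Delta^2=(\sigma_1\sigma_2)^3=(\sigma_2\sigma_1)^3$, and $(\sigma_1\sigma_2)^3\to(\sigma_2\sigma_1)^3$ costs two more. Viewing the first word as the block $\sigma_1^{2\mm}$ sitting to the left of the $\mm$ blocks $\cc$, I would bubble it rightward: moving each of the $\mm$ copies of $\sigma_1^2$ past each of the $\mm$ copies of $\cc$ requires $\mm^2$ local swaps, for a total of $4\mm^2$ relations, producing the second word. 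Together with the lower bound this yields exactly $4\mm^2$. I expect the verification of $|\Delta\Phi|\le 2$ to be the main obstacle, since it rests on the precise claim that every relation acts on three crossings of three distinct pairs and preserves their names; the upper bound is then routine bookkeeping.
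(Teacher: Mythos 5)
Your proposal is correct and follows essentially the same route as the paper: the lower bound is exactly the paper's naming argument (a single braid relation reverses three crossings belonging to three \emph{distinct} strand pairs, so it cannot reorder two crossings of the same pair and can repair only a bounded number of inversions between the $\{1,2\}$-crossings and the others; your potential $\Phi$ with $8m^2$ inversions fixed at most two per move is just a rescaling of the paper's count of $4m^2$ inversions between $\{1,2\}$- and $\{2,3\}$-crossings fixed at most one per move). The only difference is that you replace the paper's van Kampen diagram for the upper bound by an explicit four-relation commutation of $\sigma_1^2$ past $\sigma_2\sigma_1^2\sigma_2$ iterated $m^2$ times, which is a correct and equivalent computation.
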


The purpose of this note is to observe that the above result also follows
from the direct combinatorial argument similar to the one developed
in~\cite{Dhx} for the reduced expressions of a permutation.

By definition, a braid word~$\ww$ is a finite sequence of letters~$\sig\ii$
and their inverses, and it naturally encodes a braid diagram~$\Diag\ww$
once one decides that $\sig\ii$ encodes the elementary diagram in
which the $(\ii+1)$st strand passes over the $\ii$th strand. For instance,
the diagrams associated with the words of Proposition~\ref{P:Main} are
displayed in Figure~\ref{F:Diagrams}.

For simplicity, we restrict to positive braid words, \ie, words that contain
no letter~$\siginv\ii$---see Remark~\ref{R:Negative} below. Each strand in
a braid diagram has a well-defined initial position, hereafter called its
\emph{name}, and we can associate with each crossing of the diagram,
hence with each letter in the braid word that encodes it,  the names of the
strands involved in the crossing. As two strands may  cross more than once,
we shall also include the rank of the crossing, thus using the name
$\Name\pp\qq\aa$ for the $\aa$th crossing of the strands with initial
positions~$\pp$ and~$\qq$. In this way, we associate with each
positive braid word a sequence of names:

\begin{defi}
(See Figure~\ref{F:Diagrams}.)
For~$\ww$ a positive braid word, the sequence~$\Seq\ww$ is defined to be
empty if $\ww$ is the empty word and, for $\ww = \ww'
\sig\ii$, to be the sequence obtained from~$\Seq{\ww'}$ by appending
$\Name\pp\qq\aa$, where $\pp$ and~$\qq$ are the initial positions of
the strands that finish at position~$\ii$ and~$\ii+1$ in~$\Diag{\ww'}$
and $\aa-1$ is the number of times the latter strands cross
in~$\Diag{\ww'}$. 
\end{defi}

\begin{figure}[htb]
\begin{picture}(96,44)(0,0)
\put(0,0){\includegraphics{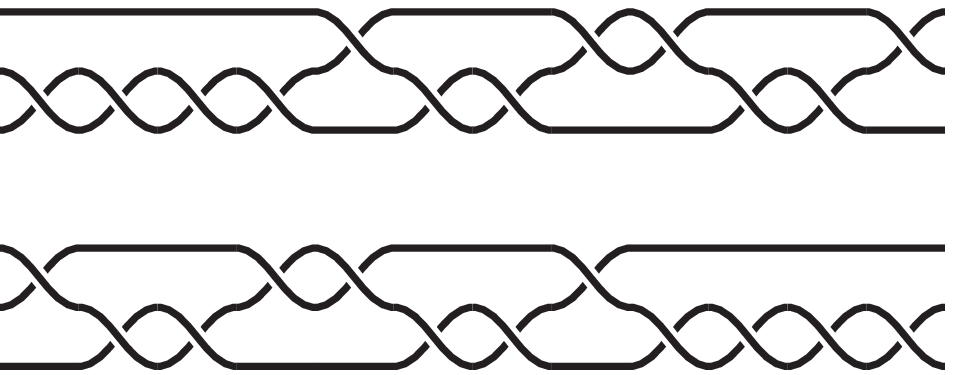}}
\put(-3,0){$1$}
\put(-3,6){$2$}
\put(-3,12){$3$}
\put(-3,24){$1$}
\put(-3,30){$2$}
\put(-3,36){$3$}
\put(0,16){$\sName231$}
\put(8,16){$\sName131$}
\put(16,16){$\sName132$}
\put(24,16){$\sName232$}
\put(32,16){$\sName233$}
\put(40,16){$\sName133$}
\put(48,16){$\sName134$}
\put(56,16){$\sName234$}
\put(64,16){$\sName121$}
\put(72,16){$\sName122$}
\put(80,16){$\sName123$}
\put(88,16){$\sName124$}
\put(0,40){$\sName121$}
\put(8,40){$\sName122$}
\put(16,40){$\sName123$}
\put(24,40){$\sName124$}
\put(32,40){$\sName231$}
\put(40,40){$\sName131$}
\put(48,40){$\sName132$}
\put(56,40){$\sName232$}
\put(64,40){$\sName233$}
\put(72,40){$\sName133$}
\put(80,40){$\sName134$}
\put(88,40){$\sName234$}
\end{picture}
\caption{\sf Braid diagrams associated with the two braid words of
Proposition~\ref{P:Main} (here with $\mm = 2$), together with the
associated sequences of names.}
\label{F:Diagrams}
\end{figure}

The fact that three (\resp. four) different strands are involved in a braid
$\sig\ii \sig{\ii+1} \sig\ii$ (\resp. $\sig\ii \sig\jj$ with $\vert\ii-\jj\vert
\ge 2$) and the explicit definition of the names immediately imply

\begin{lemm}
\label{L:Permut}
Assume that $\ww, \ww'$ are $\nn$-strand braid words and $\ww'$ is
obtained from~$\ww$ by applying one braid relation $\sig\ii \sig{\ii+1}
\sig\ii = \sig{\ii+1} \sig\ii \sig{\ii+1}$ (\resp. $\sig\ii \sig\jj = \sig\jj
\sig\ii$ with $\vert \ii - \jj\vert \ge 2$). Then there exist pairwise distinct
numbers~$\pp, \qq, \rr$ in~$\{1, ..., \nn\}$ and integers~$\aa, \bb, \cc$
(\resp. pairwise distinct $\pp, \qq, \rr, \qq$ and integers~$\aa, \bb$) such
that $\Seq{\ww'}$ is obtained from~$\Seq\ww$ by reversing some
subsequence $(\Name\pp\qq\aa, \Name\pp\rr\bb, \Name\qq\rr\cc)$
(\resp. reversing some subsequence $(\Name\pp\qq\aa,
\Name\rr\ss\bb)$). 
\end{lemm}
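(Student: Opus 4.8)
The plan is to exploit the fact that a single braid relation is a purely \emph{local} modification of the word, so that only a short block of names can change. I would write the braid-relation case as $\ww = \ww_1 \, \sig\ii\sig{\ii+1}\sig\ii \, \ww_2$ and $\ww' = \ww_1 \, \sig{\ii+1}\sig\ii\sig{\ii+1} \, \ww_2$, and the commutation case as $\ww = \ww_1 \, \sig\ii\sig\jj \, \ww_2$, $\ww' = \ww_1 \, \sig\jj\sig\ii \, \ww_2$ with $\vert\ii-\jj\vert\ge2$. Since the prefix $\ww_1$ is literally shared, the diagrams $\Diag{\ww_1}$ coincide and $\Seq{\ww_1}$ is a common prefix of $\Seq\ww$ and $\Seq{\ww'}$. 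I would then name the strands entering the modified factor: in the braid-relation case let $\pp, \qq, \rr$ be the names of the strands occupying positions $\ii, \ii+1, \ii+2$ in $\Diag{\ww_1}$, which are pairwise distinct because distinct positions always carry distinct names; in the commutation case let $\pp, \qq$ (\resp. $\rr, \ss$) be the names at positions $\ii, \ii+1$ (\resp. $\jj, \jj+1$), four distinct names since the two position-pairs are disjoint.

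Next I would simply trace these strands through the factor and read off the names created, using the definition of $\Seq{\cdot}$. For $\ww$, the three crossings of $\sig\ii\sig{\ii+1}\sig\ii$ successively involve the pairs $\{\pp,\qq\}$, $\{\pp,\rr\}$, $\{\qq,\rr\}$, producing the block $(\Name\pp\qq\aa, \Name\pp\rr\bb, \Name\qq\rr\cc)$ and leaving positions $\ii, \ii+1, \ii+2$ occupied by $\rr, \qq, \pp$. For $\ww'$, the same tracing through $\sig{\ii+1}\sig\ii\sig{\ii+1}$ involves the pairs in the \emph{reverse} order $\{\qq,\rr\}$, $\{\pp,\rr\}$, $\{\pp,\qq\}$, and leaves the same final occupation $\rr, \qq, \pp$. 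The commutation case is identical but simpler: each side performs the two disjoint transpositions and creates $(\Name\pp\qq\aa, \Name\rr\ss\bb)$ in one order and its reverse in the other.

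What remains, and what I expect to be the only delicate point, is to check that the rank subscripts agree between the two sides, so that the new block of $\Seq{\ww'}$ is exactly the \emph{reversal} of the new block of $\Seq\ww$ rather than merely a reordering of differently-indexed names. This holds because inside the modified factor each relevant pair crosses exactly once, so the rank attached to a crossing of a pair is in every case one more than the number of crossings of that pair inside $\ww_1$, a quantity common to $\ww$ and $\ww'$. Hence $\aa, \bb, \cc$ (\resp. $\aa, \bb$) are the same on both sides, and the block really is reversed. Finally, since the factor realises the same permutation of the three (\resp. four) strands and increments each pairwise crossing count identically, the diagrams agree again at the start of $\ww_2$, so $\Seq{\ww_2}$ contributes a common suffix. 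Combining the common prefix, the reversed block, and the common suffix gives the claim.
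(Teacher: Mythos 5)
Your proof is correct and follows exactly the route the paper intends: the paper dismisses the lemma as an immediate consequence of the locality of the rewriting and the definition of the names, and your write-up is simply a careful expansion of that same argument (common prefix and suffix, tracing the three or four strands through the modified factor, and the check that the rank subscripts are unchanged because each relevant pair crosses only once inside the factor). Nothing to add.
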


Then Proposition~\ref{P:Main} is easy.

\begin{proof}[Proof of Proposition~\ref{P:Main}]
Figure~\ref{F:Kampen} below makes the upper bound trivial, so we only
have to prove a lower bound result. Let $\ww_\mm$ and $\ww'_\mm$ be
the involved braid words. We consider the entries of the
form~$\Name12\aa$ and~$\Name23\bb$ in~$\Seq{\ww_\mm}$
and~$\Seq{\ww'_\mm}$. In~$\Seq{\ww_\mm}$, they appear in the
order $\Name121, ..., \Name12{2\mm}, \Name231, ...,
\Name23{2\mm}$, whereas in~$\Seq{\ww'_\mm}$ they appear in the
order $\Name231, ..., \Name23{2\mm}, \Name121, ..., \Name12{2\mm}$.
By Lemma~\ref{L:Permut}, applying one braid relation can only switch two
entries in these sequences. Therefore, at least $4\mm^2$ braid relations
(of the form $\sig1 \sig2 \sig 1 = \sig2 \sig1 \sig2$) are needed to
exchange the $2\mm$~entries~$\Name12\aa$ and the
$2\mm$~entries~$\Name23\bb$.
\end{proof} 

We conclude with a few additional observations.

\begin{rema}
Each derivation from a (positive) braid word to another equivalent one can
be illustrated using a van Kampen diagram, which is a planar diagram
tesselated by tiles corresponding to braid relations---see for
instance~\cite{Eps}, and Figure~\ref{F:Kampen} below. For each
name~$\Name\pp\qq\aa$ occurring in the diagram, connecting all edges
having that name yields a family of transversal curves called
\emph{separatrices} in~\cite{Dhx}. It is easy to check that, if any two
separatrices of a van Kampen diagram cross at most once, then the diagram
must be optimal, in the sense that the number of faces is the minimal
possible one,
\ie, it achieves the combinatorial distance. This criterion is clearly satisfied
in the case of Figure~\ref{F:Kampen}. As the diagram has $4\mm^2$
faces, we conclude that the combinatorial distance between the bounding
words is exactly~$4\mm^2$.
\end{rema}

\begin{figure}[htb]
\begin{picture}(66,72)(0,0)
\put(0,0){\includegraphics{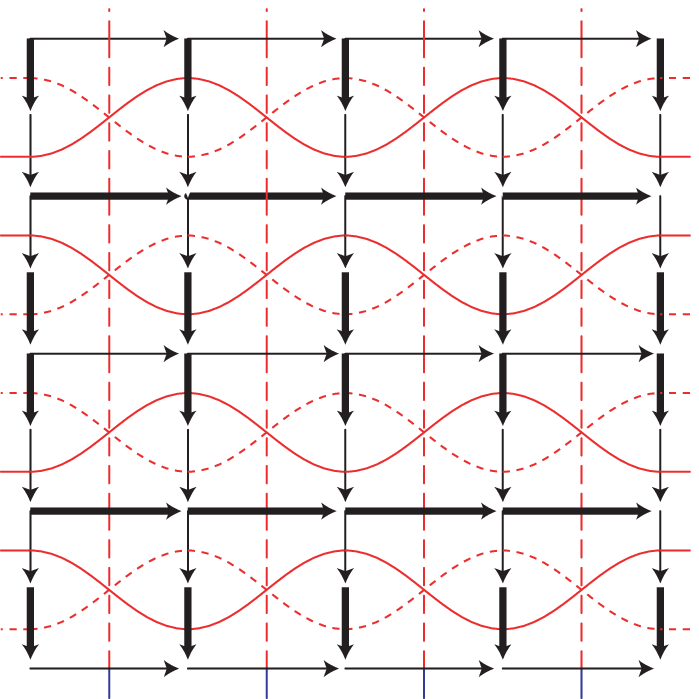}}
\put(8,71){$\sName121$}
\put(24,71){$\sName122$}
\put(40,71){$\sName123$}
\put(56,71){$\sName124$}
\put(-7,63){$\sName231$}
\put(-7,55){$\sName131$}
\put(-7,47){$\sName132$}
\put(-7,39){$\sName232$}
\put(-7,31){$\sName233$}
\put(-7,23){$\sName133$}
\put(-7,15){$\sName134$}
\put(-7,7){$\sName234$}
\end{picture}
\caption{\sf A van Kampen diagram witnessing that the two braid words of
Proposition~\ref{P:Main} are equivalent (here with $\mm = 2$). Thin edges
represent~$\sig1$, thick edges represent~$\sig2$. The
dotted (red) lines connect the edges bearing the same name;
as any two of them cross at most once, the diagram  achieves the
combinatorial distance.}
\label{F:Kampen}
\end{figure}

\begin{rema}
\label{R:Negative}
The current approach can be easily extended to arbitrary braid words. The
name attributed to a negative crossing~$\siginv\ii$ has to be defined to be
$\Name\pp\qq\aa\inv$, where $\pp, \qq$ still are the initial positions of
the strands that cross, and $\aa$ is the number of earlier crossings of
these strands, counted algebraically, \ie, it is the linking number of these
two strands so far. For instance, the sequence $\Seq{\sig1 \sigg2{-3} \sig2
\sig1}$ is $(\Name121, \Name130\inv, \Name13{-1}\inv,
\Name13{-2}\inv, \Name13{-2}, \Name122)$. Then Lemma~\ref{L:Permut}
remains valid, as the contribution of the free group relations $\sig\ii
\siginv\ii = \siginv\ii \sig\ii = 1$ consists in creating or deleting a
subsequence of the form $(\Name\pp\qq\aa, \Name\pp\qq\aa\inv)$ or
$(\Name\pp\qq\aa\inv, \Name\pp\qq\aa)$.
\end{rema}

\begin{rema}
A more symmetric and still more obvious example implying a number of
relations that is quadratic with respect to the length of the initial words is
obtained by starting with~$\sigg1{2\mm}$ and~$\sigg2{2\mm}$ and
completing them into their least common right multiple. Then the
sequence associated with the first braid word must begin with $\Name121,
..., \Name12{2\mm}$, whereas that associated with the second one must
begin with $\Name231, ..., \Name23{2\mm}$, so,
by Lemma~\ref{L:Permut} again, $4\mm^2$ braid relations are certainly
needed to transform one into the other---see Figure~\ref{F:KampenBis} for
an illustration in terms of van Kampen diagram and separatrices.
\end{rema}

\begin{figure}[htb]
\begin{picture}(72,72)(0,0)
\put(0,0){\includegraphics{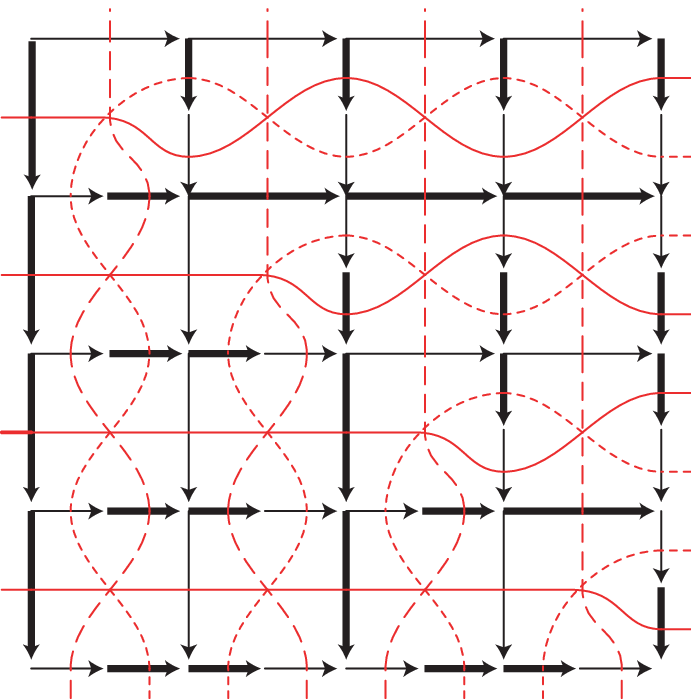}}
\put(8,71){$\sName121$}
\put(24,71){$\sName122$}
\put(40,71){$\sName123$}
\put(56,71){$\sName124$}
\put(-7,59){$\sName231$}
\put(-7,43){$\sName232$}
\put(-7,27){$\sName233$}
\put(-7,11){$\sName234$}
\put(71,55){$\sName131$}
\put(71,46){$\sName132$}
\put(71,22){$\sName133$}
\put(71,14){$\sName134$}
\end{picture}
\caption{\sf A  van Kampen diagram witnessing that the braid
words $\sigg1{2\mm}(\sig2\sigg12 \sig2)^\mm$ and  
$\sigg2{2\mm}(\sig1\sigg22 \sig1)^\mm$ are equivalent and lie at
combinatorial distance $4\mm^2$ (here with
$\mm = 2$). As in Figure~\ref{F:Kampen}, the dotted (red) lines connect
the edges bearing the same name. Any two of them cross at most once,
hence the diagram is optimal.}
\label{F:KampenBis}
\end{figure}


\begin{thebibliography}{9}

\def\Reff#1; #2; #3; #4; #5; #6; #7\par{%
\bibitem{#1} #2, \emph{#3}, #4 {\bf #5} (#6) #7}

\def\Ref#1; #2; #3; #4\par{%
\bibitem{#1} #2, \emph{#3}, #4}

\Ref Dhx; M.\,Autord \& P.\,Dehornoy;  On the distance between the
expressions of a permutations; arXiv: math.CO/0902.3074.

\Ref Eps; D.\,Epstein, with J.\,Cannon, D.\,Holt, S.\,Levy, M.\,Paterson \&
W.\,Thurston; Word Processing in Groups; Jones \& Bartlett Publ.
(1992).

\Ref HKN; J.\,Hass, A.\,Kalka, and T.\,Nowik; Complexity of relations in the
braid group; arXiv: math.GR/0906.0137.

\Reff HaN; A.\,Kalka, \& T.\,Nowik; Invariants of knot diagrams; Math.
Ann.; 342; 2008; 125--137.

\end{thebibliography}
\end{document}